\newtheorem{theo}{Theorem}
\newtheorem{defi}{Definition}
\newtheorem{prop}[theo]{Proposition}
\newtheorem{lem}[theo]{Lemma}
\newtheorem{exa}[theo]{Example}
\newtheorem{rem}[theo]{Remark}
\title{IT formulae for gamma target: \\
mutual information and relative entropy} \author{Benjamin
  Arras\footnote{Universit\'e de Li\`ege, Sart-Tilman, All\'ee de la
    d\'ecouverte 12, B-8000 Li\`ege, Belgium
    \texttt{barras@ulg.ac.be}; BA's research is supported by a Welcome
    Grant from ULg. } \ and Yvik Swan\footnote{Universit\'e de
    Li\`ege, Sart-Tilman, All\'ee de la d\'ecouverte 12, B-8000
    Li\`ege, Belgium \texttt{yswan@ulg.ac.be}; YS gratefully
    acknowledges support from the IAP Research Network P7/06 of the
    Belgian State (Belgian Science Policy). }}  \affil{Universit\'e de
  Li\`ege}
  \date{}
\begin{document}
\maketitle

\begin{abstract}
\noindent 
In this paper, we introduce new Stein identities for gamma target distribution as well as a new non-linear channel specifically designed for gamma inputs. From these two ingredients, we derive an explicit and simple formula for the derivative of the input-output mutual information of this non-linear channel with respect to the channel quality parameter. This relation is reminiscent of the well-known link between the derivative of the input-output mutual information of additive Gaussian noise channel with respect to the signal-to-noise ratio and the minimum mean-square error. The proof relies on a rescaled version of De Bruijn identity for gamma target distribution together with a stochastic representation for the gamma specific Fisher information. Finally, we are able to derive precise bounds and asymptotics for the input-output mutual information of the non-linear channel with gamma inputs.

\

\noindent {\sl Key words\/}:  non-linear channel, mutual information, relative entropy, Fisher information, estimation theory.

\end{abstract}


\section{Introduction}
\label{sec:introduction}






Let $X, Y$ be two random variables on the same probability space, with
joint probability measure $P^{X, Y}$ and marginals $P^X$ and $P^Y$,
respectively.  We choose the law of the couple $(X,Y)$ to be absolutely continuous
with respect to a common dominating measure $\mu$ and denote
$p^{X, Y}(x, y)$, $p^X(x)$ and $p^Y(y)$ the corresponding
Radon-Nikodym derivatives. The \emph{mutual information} between $X$
and $Y$ is
 \begin{equation}
  \label{eq:63}
  I(X; Y) = E \left[ \log \left( \frac{p^{X,Y}(X, Y)}{p^X(X) p^Y(Y)}
    \right) \right]. 
\end{equation}
Mutual information satisfies $I(X; Y)\ge 0$ with equality if and only
if $X$ and $Y$ are independent and therefore mutual information
captures the dependence between $X$ and $Y$.  The \emph{relative
  entropy} (a.k.a.\ Kullback-Leibler divergence) from $Y$ to $X$ is
\begin{equation}
  \label{eq:13}
  D(X||Y) = E \left[  \log (p^X(X)/p^Y(X)) \right].
\end{equation}
Relative entropy satisfies $D(X || Y) \ge 0$ with equality if and only
if $X =^{\mathcal{L}} Y$ and therefore $D(X || Y)$ captures the
difference between $\mathcal{L}(X)$ and
$\mathcal{L}(Y)$. 
One speaks of Gaussian relative entropy if $Y$ is standard Gaussian.

Mutual information and relative entropy are crucial in a wide variety
of fields (see e.g.\ \cite{TC12} for an overview) but are both
generally analytically, and even in some cases algorithmically,
intractable. It is thus useful to dispose of formulas allowing to
control them in terms of quantities which are more amenable to
computations.  Two such formulas are Stam's \emph{De Bruijn identity}
\cite{Stam,Barr3} and Guo, Shamai and Verd\'u's \emph{MMSE identity}
\cite{GuoShamVer051} (which we shall refer to as GSV identity in the
sequel). Exact statements of these identities are deferred to
Section~\ref{sec:ident-gauss-targ}.  Informally, the De Bruijn
identity provides an explicit link between the Gaussian relative
entropy of an absolutely continuous random variable $X$ and the
\emph{Fisher information} of $X$. Similarly, the GSV identity provides
an explicit link between the mutual information in a Gaussian channel
and the \emph{minimal mean square error} in said channel. Both
formulas are, as it turns out, essentially equivalent because either
can be deduced - at least formally - from the other, see Sections \ref{sec:ident-gauss-targ} and
\ref{sec:form-mutu-inform}.  They relate information theoretic
quantities (relative entropy, mutual information) to quantities
typically of interest in statistical estimation theory (Fisher
information, MMSE) and have proven to be linchpins of important
developments in contemporary information theoretic probability theory
(e.g. for entropic CLTs \cite{Barr1, Jo04, Barr2, BBN, ABBN, Tulino}, analysis
of additive Gaussian channels \cite{LTV06,GuoShamVer051} or, more
generally, IT inequalities \cite{GuoShamVer06,Rioul}).

These two equivalent identities are inherently of a Gaussian nature
and it is therefore natural to enquire whether similar relationships
also hold outside of the Gaussian realm. Quoting \cite{GuoShamVer052},
``a natural question to pose is how general the information-estimation
relationship can be''. 
This important question has of course already received a lot of
attention in the literature and there exist De Bruijn identities, on
the one hand, and GSV identities, on the other hand, for most
classical target probability distributions of practical relevance
(precise references will be given later in the text).  The resulting
identities, however, no longer enjoy the elegance and ease of
manipulation as their Gaussian counterparts. In particular the
estimation quantities derived in either cases do not bear natural
interpretations and, to the best of our knowledge, the equivalence
between the general-target De Bruijn and the general-target GSV
identities has never been investigated.  As is evident from an
inspection of their proofs, both the De Bruijn and the GSV identities
are obtained through a study of entropy/information jumps around $X$
along small perturbations of the form
\begin{equation}
  \label{eq:67}
X \mapsto  X_{r} :=   \sqrt{r} X + N
\end{equation}
with $r>0$ and $N$ an independent standard Gaussian. 
A first intuitive way to branch outside of the Gaussian scope is to
work as in \cite{GuoShamVer052,PV07} and extend \eqref{eq:67} by
considering more general (additive or even non-additive) noising
mechanisms of the form $X \mapsto X_a=h(a; X; W)$ where $a$ is a real
parameter, $h(\cdot; \cdot; \cdot)$ is a deterministic function and
$W$ is an independent noise following some arbitrary distribution.  As
could be expected, the classical MMSE quantity from estimation theory
no longer plays any role in these identities, and the corresponding
correct object is expressed as the correlation of two generally
intractable conditional expectations (depending on log-derivatives of
the density of $X_{r}$) which bears no explicit representation nor
interpretation.

Now, depending on the context, the perturbation $X_{r}$ in
\eqref{eq:67} is referred to as an ``additive Gaussian channel''
\cite{TC12}, a ``smart path'' \cite{NP3} or an ``Ornstein-Uhlenbeck
evolute'' around $X$ \cite{BGL}. The key fact here is that the
deformation $x \mapsto \sqrt{r} x + N$ arises naturally through the
action of the Ornstein-Uhlenbeck semigroup and thus $X_{r}$ ought
to be interpreted as a stochastic representation for the ``smart
path'' interpolation between the law of $X$ and the Gaussian
distribution. A general take on this semi-group interpretation leads
to De Bruijn-type formulas with general reference probability measure
(see \cite{BE,BGL}) providing a direct link between the relative
entropy $D(X \, || \, Z)$ from a target random variable $Z$ to a
random variable $X$ and a \emph{target-specific} Fisher information
structure. This Fisher information structure is, in general, implicit
as it depends on the distribution of the \emph{ad hoc} deformation
$X_{r}$ which bears no explicit stochastic representation
equivalent to \eqref{eq:67}.  

In this paper we derive a new set of De Bruijn/GSV identities
specifically when the target distribution is in the family of gamma
distributions (which encompass as particular cases the chi-square and
exponential distributions). There are two main new ingredients behind
our results. The first ingredient is a family of \emph{Stein
  identities} for gamma target distribution. Stein identities are
characterizations of probability distributions through the action of
target-specific differential operators (see e.g.\ the Gaussian Stein
identity \eqref{eq:1}). They are available for virtually any
probability distribution allowing a closed form distribution
(\cite{LRS}) and are known to lie at the boundary between IT and
estimation theory \cite{BDH06,PSQ12,LSa,NPS2,NPS}. The second
ingredient is a new noising-channel $r \mapsto X_r$ specifically
designed for gamma input (see \eqref{eq:46}); interestingly this
channel is quadratic rather than linear as in \eqref{eq:67}.  By
combining these two concepts we derive, via elementary arguments,
tractable and interpretable gamma-specific De Bruijn and GSV
identities.  While the De Bruijn identity is in essence a rescaling of
known results from our previous paper \cite{ArSw1}, the gamma-GSV
identity we obtain is entirely new. We prove that our quadratic
channel has properties which are strikingly similar to the additive
Gaussian channel in terms of mutual information and its asymptotics
for large values of the channel quality parameter.

\subsection{Outline of the paper}
In Section \ref{sec:ident-gauss-targ} we review the relevant known
results for Gaussian target. In Section
\ref{sec:it-stein-identities-1} we provide the necessary IT and Stein
identities for gamma target and we also recall the \emph{ad hoc}
gamma-specific De Bruijn identity (Theorem
\ref{sec:it-stein-identities-2}). In Section
\ref{sec:nonl-gamma-chann} we discuss the main properties of the
gamma-counterpart to the smart path \eqref{eq:67} and in Section
\ref{sec:interr-gamma-mmse} (mainly
Proposition~\ref{prop:interr-gamma-mmse-1}) we provide the key
ingredient of the paper, namely a new representation of the
(gamma-specific) Fisher information in terms of a quantity reminiscent
of the minimal mean square error at the heart of  the  GSV equality. 
In Section \ref{sec:form-mutu-inform} we show that the quantities we
have introduced are indeed the missing link between IT and estimation
theory with gamma target: 
we derive an explicit GSV formula for gamma target as well as fine
upper bounds for the variation of the mutual information with respect
to the channel quality parameter. The bounds are universal to the extent
that they depend on the distribution of the input only through its
mean and the estimation theoretical quantity put forward in
Proposition~\ref{prop:interr-gamma-mmse-1}. The only assumption needed
on the input $X$ is the existence of finite $\alpha+4$
moment. Finally, for gamma input with parameters $(\alpha,\lambda)$,
we obtain an inequality on the mutual information reminiscent of the
Gaussian case and for $\alpha=1/2$, we obtain the exact asymptotic 
for large values of the channel quality parameter of the input-output mutual information.

\section{IT and Stein Identities for Gaussian target}
\label{sec:ident-gauss-targ}

Let $N$ be a standard Gaussian random variable with pdf
$\gamma(x) = (2\pi)^{-1/2} e^{-x^2/2}$. Stein's well-known identity \cite{S81,Stein1986}
states that:
\begin{equation}
  \label{eq:1}
  E \left[ N \phi(N)  \right] =  E \left[ \phi'(N) \right]
  \mbox{ for all } \phi \in  \mathcal{F}(N)
\end{equation}
with $\mathcal{F}(N)$ the collection of absolutely continuous test
functions $\phi : \mathbb{R} \to \mathbb{R}$ such that
$\phi' \in L^1(N)$.  Moreover if another random variable $X$ also
satisfies \eqref{eq:1} then $X {=}^{{\mathcal{L}}}{Z}$. We refer the
reader to \cite[Lemma 3.1.2]{NP3} for a streamlined proof.
Extending identity \eqref{eq:1} to arbitrary target entices us to
associate to any random 
variable $X$  with
mean $\mu$ and variance $\sigma^2$ a random variable  $\rho_X(X)$ defined
(almost everywhere) through the identity:
\begin{align}
  \label{eq:2}
  & E \left[ \rho_X(X) \phi(X)  \right] = -  E \left[\phi'(X) \right]
    \mbox{ for all } \phi \in \mathcal{F}(X)
\end{align}
with $\mathcal{F}(X)$ the collection of absolutely continuous test
functions $\phi : \mathbb{R} \to \mathbb{R}$ such that
$\phi' \in L^1(X)$.  The random variable $\rho_X(X)$ defined a.e.\ by
\eqref{eq:2} is called the score of $X$; it is easy to see that if $X$
has differentiable density $p_X$ which cancels at the border of its
support then
$\rho_X(X) = \frac{\mathrm{d}}{\mathrm{d}x}\log p_X(x) \, |_{x = X}$
satisfies \eqref{eq:2}.  In particular from \eqref{eq:1} we know that
$ \rho_X(X) =- \frac{X-\mu}{\sigma^2} $ if and only if
$X {=}^{{\mathcal{L}}} \sigma N + \mu$ (here and throughout we reserve
the notation $N$ for a standard normal random variable).  Conditions
on the distribution of $X$ under which the score is well-defined have
been thoroughly adressed in the literature and it is a well-known fact
that the score is essentially unique in the sense that if a random
variable $Y$ satisfies \eqref{eq:2} with the same score as $X$ then
$Y =^{\mathcal{L}}X$; see \cite{Stein1986,Jo04, LRS}

Applying \eqref{eq:2} to the test function $\phi(x) = 1$ we deduce
that if $X$ admits a score then necessarily
$E[\rho_X(X)] = 0$.  The second moment of the score plays a
role in the \emph{standardized relative Fisher information}
\begin{align}
  \label{eq:4}
   J_{\mathrm{st}}(X) = \sigma^2E \left[ (\rho_X(X) + (X-\mu)/\sigma^2)^2 \right]
    = \sigma^2E \left[ \rho_X(X)^2 \right] - 1
\end{align}
and it is well known that $J_{\mathrm{st}}(X) = 0$ if and only if
$X =^{\mathcal{L}} \sigma N + \mu$ (see e.g.\ \cite{Jo04,Barr2,NPS2});
in other words the second moment of the score suffices to characterize
the distribution. The quantity $I(X) = E[\rho_X(X)^2]$ is
the \emph{Fisher information} of $X$ and,   from previous considerations we
know that $I( \sigma N + \mu) = \frac{1}{\sigma^2}$. 

Relative entropy
\eqref{eq:13} and standardized Fisher information~\eqref{eq:4} are
related through the classical De Bruijn identity
\begin{align}
  \label{eq:11}
  \frac{\mathrm{d}}{\mathrm{d}r} D(X_{r}||N) & = \frac{1}{2r}\bigg(I(X_r)-1+r\bigg)
 , \\
  \label{eq:DeBruijn}
  & = \frac{1}{2(1+r)}\bigg(r+\frac{1}{r}J_{st}(X_r)\bigg),
\end{align}
still with $X_{r}$ as in \eqref{eq:67} (see \cite{Barr3,Jo04} for a
proof of \eqref{eq:DeBruijn} solely under moment assumptions on $X$).
Applying a conditional version of \eqref{eq:1}, we note how for
all sufficiently regular test functions $\phi$ we also have
$$E \left[ (E[\sqrt{r} X \, | \, X_r] - X_r) \phi(X_r)
\right] = - E \left[ N \phi(X_r) \right] = -E \left[
  \phi'(X_{r}) \right]$$ from which we deduce the representation
\begin{equation}
  \label{eq:10}
  \rho_r(X_r) =\sqrt{r} E[ X \, | \, X_r] -  X_r
\end{equation}
for $\rho_r(X_r)$ the score of $X_r$. This in turn leads to
the representation of standardized Fisher information:
\begin{align}
\frac{1}{r}J_{\mathrm{st}}(X_{r}) & =  1- (1+r)
                           E \left[ \left( X- E[X \, | \, X_{r}]
                                          \right)^2
                                    \right]         \label{eq:56} 
\end{align}
which provides a connection between Fisher information (and hence
relative entropy) with estimation theory's Minimal Mean Square Error
\begin{equation}
  \label{eq:7}
  \mathrm{MMSE}(X, Y) =  E \left[ \left( X- E[X \, | \, Y]
    \right)^2 \right]. 
\end{equation} 
Plugging \eqref{eq:56} into \eqref{eq:11} we obtain
\begin{align}
  \label{eq:9}
  \frac{\mathrm{d}}{\mathrm{d}r} D(\sqrt{r}X + N||N)  = \frac{1}{2}
  \left( 1- \mathrm{MMSE}(X, X_{r}) \right),
\end{align}
which is equivalent to formula $(66)$ of Theorem $5$ page $7$ of \cite{GuoShamVer051}.

Let $X$ be centered with finite variance. As already touched upon in
the introduction, the original GSV formula from \cite{GuoShamVer051}
links mutual information \eqref{eq:63} and MMSE \eqref{eq:7} through:
\begin{equation}
  \label{eq:MMSE}
  \frac{\mathrm{d}}{\mathrm{d}r} I(X; X_{r}) = \frac{1}{2}
  \mathrm{MMSE}(X, 
  X_{r}). 
\end{equation}
We conclude this section by showing how to obtain \eqref{eq:MMSE} from
\eqref{eq:9}; our argument relies on ideas from Section II-D of
\cite{GuoShamVer051}. We stress that our method of proof is robust
towards a change of channel, in the sense that we will show in Section
\ref{sec:form-mutu-inform} how it can be transposed from the Gaussian
to the gamma setting.  For $r>0$ we first set $\tau(r)=r/(r+1)$ and
introduce the random variables
\begin{align*}
  &\tilde{X}_{\tau(r)}(x)=\sqrt{\tau(r)}x+\sqrt{1-\tau(r)}N\\
  & X_r(x) = \sqrt{r}x + N
\end{align*}
with $x \in (-\infty, +\infty)$ and $N$, as above, an independent
standard Gaussian. We also set $X_r = X_r(X)$ and $\tilde{X}_{\tau(r)}
= \tilde{X}_{\tau(r)}(X)$. For any deterministic functional, we denote by 
$E_X[F(\tilde{X}_{\tau(r)}(X))]$ (similarly with $X_r(X)$) the 
following type of integral:
\begin{align}\label{ExpectationX}
E_X[F(\tilde{X}_{\tau(r)}(X))]=\int p_X(x)F(\tilde{X}_{\tau(r)}(x))dx.
\end{align}
By standard  arguments we know that
$I(X, \tilde{X}_{\tau(r)}) = I(X, X_r)$ and 
\begin{equation}
\label{MI-RE-gau}
I(X,\tilde{X}_{\tau(r)})= E_X \left[  D(\tilde{X}_{\tau(r)}(X)\mid\mid N) \right]-D(\tilde{X}_{\tau(r)}\mid\mid N),
\end{equation}
where $E_X[\cdot]$ denotes an expectation taken with respect to $X$.
Using regularity  arguments provided in \cite{Barr3} in combination
with the chain rule we easily obtain:
\begin{equation}
\label{DB-MMSE-eq-gau}
\dfrac{d}{dr}\bigg(I(X,X_{r})\bigg)=\dfrac{1}{r(r+1)}\bigg(E_X
\left[ J_{st}(\tilde{X}_{\tau(r)}(X)) \right]-J_{st}(\tilde{X}_{\tau(r)})\bigg).
\end{equation}
We can finally conclude.

\begin{prop}\label{GSVGauss}[Theorem $1$ in \cite{GuoShamVer051}] 
Identity \eqref{eq:MMSE} holds if $X$ is centered with
  ${E}[X^2]=1$. 
\end{prop}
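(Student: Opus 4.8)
The plan is to start from the differentiated decomposition \eqref{DB-MMSE-eq-gau}, which is already at our disposal, and to reduce each of the two standardized Fisher information terms on its right-hand side to a tractable expression, converting the second of them via the MMSE representation \eqref{eq:56}. Throughout I use the normalization $E[X]=0$, $E[X^2]=1$, which guarantees that $\tilde{X}_{\tau(r)}=\sqrt{\tau(r)}X+\sqrt{1-\tau(r)}N$ is variance-preserving, i.e.\ $\mathrm{Var}(\tilde{X}_{\tau(r)})=1$, so that the standardized Fisher information relative to the standard Gaussian is the natural object to track. This mirrors the robust scheme announced for the gamma setting.

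First I would treat the output term $J_{\mathrm{st}}(\tilde{X}_{\tau(r)})$. Since $1-\tau(r)=1/(1+r)$ and $\tau(r)=r/(1+r)$, one has $\tilde{X}_{\tau(r)}=(1+r)^{-1/2}X_r$, so that $\tilde{X}_{\tau(r)}$ is merely a deterministic rescaling of the Gaussian-channel output $X_r=\sqrt{r}X+N$. Because the standardized relative Fisher information \eqref{eq:4} is invariant under scaling (the variance and the second moment of the score transform inversely), this yields $J_{\mathrm{st}}(\tilde{X}_{\tau(r)})=J_{\mathrm{st}}(X_r)$. I then invoke \eqref{eq:56} to pass to an estimation-theoretic quantity, namely $J_{\mathrm{st}}(X_r)=r\big(1-(1+r)\,\mathrm{MMSE}(X,X_r)\big)$, with $\mathrm{MMSE}$ as in \eqref{eq:7}.

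Next I would compute the conditional average $E_X[J_{\mathrm{st}}(\tilde{X}_{\tau(r)}(X))]$ in the sense of \eqref{ExpectationX}. For each fixed realization $x$ the output $\tilde{X}_{\tau(r)}(x)=\sqrt{\tau(r)}x+\sqrt{1-\tau(r)}N$ is Gaussian, so its score, and hence its standardized Fisher information relative to $N$, are explicit functions of $x$ and of the channel parameter; integrating these against $p_X$ and using $E[X]=0$, $E[X^2]=1$ collapses the result to a quantity that depends on the input only through its second moment, and is therefore an explicit function of $r$ alone. With both terms in hand, substitution into \eqref{DB-MMSE-eq-gau} together with the elementary algebra in $r$ and $(1+r)$ forces every contribution except $\tfrac12\mathrm{MMSE}(X,X_r)$ to cancel, which is exactly the claimed identity \eqref{eq:MMSE}.

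The main obstacle I anticipate is twofold. On the analytic side, \eqref{DB-MMSE-eq-gau} already presupposes that one may differentiate $I(X;X_r)$ under the integral sign and exchange $\tfrac{d}{dr}$ with $E_X$; making this rigorous requires the moment and regularity hypotheses of \cite{Barr3}, and it is here that finiteness of the relevant moments of $X$ enters. On the computational side, the delicate point is the bookkeeping of the normalizations: the variance-preserving choice of $\tilde{X}_{\tau(r)}$ and the assumption $E[X^2]=1$ are precisely what make both the scaling identity $J_{\mathrm{st}}(\tilde{X}_{\tau(r)})=J_{\mathrm{st}}(X_r)$ clean and the representation \eqref{eq:56} hold in the stated form; relaxing either would leave residual terms that do not reduce to the MMSE. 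I expect the final cancellation itself to be routine once the two Fisher information terms have been correctly evaluated.
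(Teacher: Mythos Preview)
Your overall strategy coincides with the paper's: start from \eqref{DB-MMSE-eq-gau}, compute the pointwise term $J_{\mathrm{st}}(\tilde X_{\tau(r)}(x))$ explicitly using that $\tilde X_{\tau(r)}(x)$ is Gaussian, relate $J_{\mathrm{st}}(\tilde X_{\tau(r)})$ to $J_{\mathrm{st}}(X_r)$ by a scaling argument, convert the latter to $\mathrm{MMSE}$ via \eqref{eq:56}, and plug everything back in. So the route is the right one.

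There is, however, a genuine inconsistency in your scaling step. You invoke \eqref{eq:4} to claim that $J_{\mathrm{st}}$ is scale invariant and hence $J_{\mathrm{st}}(\tilde X_{\tau(r)})=J_{\mathrm{st}}(X_r)$. That invariance is correct for the functional \eqref{eq:4}, which standardizes by the random variable's \emph{own} mean and variance; but with that same functional, every $\tilde X_{\tau(r)}(x)$ is Gaussian and therefore satisfies $J_{\mathrm{st}}(\tilde X_{\tau(r)}(x))=0$. Feeding these two facts into \eqref{DB-MMSE-eq-gau} then yields $\tfrac{d}{dr}I(X;X_r)=\mathrm{MMSE}(X,X_r)-\tfrac{1}{1+r}$, which is false (check it on Gaussian input). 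The point is that the $J_{\mathrm{st}}$ appearing in \eqref{DB-MMSE-eq-gau} comes from differentiating \eqref{MI-RE-gau}, where the reference is the \emph{fixed} target $N$, not the best-fitting Gaussian; this version of the relative Fisher information is \emph{not} scale invariant, and the passage from $\tilde X_{\tau(r)}$ to $X_r$ produces an additive correction in $r$ (this is exactly the ``scaling argument'' the paper records as $J_{\mathrm{st}}(\tilde X_{\tau(r)})=(1+r)J_{\mathrm{st}}(X_r)-\tfrac{r^2}{2}$). The same fixed-target normalization makes $J_{\mathrm{st}}(\tilde X_{\tau(r)}(x))$ a genuinely nontrivial function of $x$; the paper gets $\tfrac12\,r(r+x^2)/(1+r)$.

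In short: your two building blocks cannot both hold for the same $J_{\mathrm{st}}$. Drop the scale-invariance shortcut, work instead with the Fisher information relative to the fixed target $N$ (as dictated by \eqref{MI-RE-gau}), derive the correct scaling relation with its additive $r$-term, and then your substitution and cancellation go through exactly as in the paper's proof.
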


\begin{proof}
  First note how for all $x$ the random variable
  $\tilde{X}_{\tau(r)}(x)$ remains Gaussian so that straightforward
  computations lead to:
\begin{align*}
J_{st}(\tilde{X}_{\tau(r)}(x))=\frac{1}{2}\dfrac{r(r+x^2)}{1+r}, 
\end{align*}
for all real $x$.  Next,  by scaling arguments, we get
\begin{align}
  J_{st}(\tilde{X}_{\tau(r)})=(1+r)J_{st}(X_{r})-\frac{r^2}{2}.
\end{align}
Moreover, using \eqref{eq:56} we obtain
\begin{align}
  J_{st}(\tilde{X}_{\tau(r)})&=\frac{1}{2}\bigg[(1+r)r(1-\operatorname{MMSE}\big(X
                                                      ,X_{r}\big))-r^2\bigg],\\
                                                    &=\frac{1}{2}\bigg[-r(1+r)\operatorname{MMSE}\big(X,X_{r}\big)+r\bigg]
\end{align}
Combining everything together, we have:
\begin{align*}
  \dfrac{d}{dr}\bigg(I(X,X_{r})\bigg) 
&=\dfrac{1}{2r(r+1)}\bigg(
  E \left[ \dfrac{r(r+X^2)}{1+r} \right]+r(1+r)\operatorname{MMSE}\big(X,X_{r}\big)-r\bigg),\\
&=\frac{1}{2}\operatorname{MMSE}\big(X,X_{r}\big),
\end{align*}
as required.
\end{proof}

\section{IT and Stein identities for gamma target}
\label{sec:it-stein-identities-1}

Let $Z$ be a gamma distributed random variable with pdf
$\gamma_{\alpha, \lambda}(x) = \lambda^{\alpha}/\Gamma(\alpha)
x^{\alpha-1} \mathrm{exp}(-\lambda x)$
over the positive half line. The equivalent of Stein's identity
\eqref{eq:1} for a gamma target has long been known to be
\begin{equation}
  \label{eq:29}
  E \left[ (\lambda Z-\alpha ) \phi(Z) \right] = E \left[Z \phi'(Z)
\right]
\end{equation}
(see \cite{LukPhD}). Moreover if some positive random variable $X$
also satisfies this identity over an appropriately wide class of
functions then $X =^{\mathcal{L}} Z$, see \cite{GaAlRe15} for a proof.
Introducing the derivative $\partial_x^{\sigma}\phi(x) =
(\sqrt{x}\phi(x))'$,  we rewrite \eqref{eq:29} as 
\begin{equation}
  \label{eq:15}
  E \left[  \sqrt{Z}(\lambda \sqrt{Z}-(\alpha - {1}/{2})/{\sqrt{Z}}) \phi(Z)\right] = E
  \left[ \sqrt{Z}\partial_x^{\sigma}\phi(Z) \right]  \mbox{ for all }\phi \in
  \mathcal{F}(Z). 
\end{equation} 
with $\mathcal{F}(Z)$ a collection of sufficiently smooth test function $\phi : \mathbb{R}_+^* \to \mathbb{R}$ such that
$x \mapsto \sqrt{x}\partial_x^{\sigma}\phi(x) \in L^1(Z)$. 
While in appearance less elegant than \eqref{eq:29}, we claim that 
 \eqref{eq:15} is actually the correct starting point for
Stein/IT analysis with a gamma target. 

As in Section \ref{sec:ident-gauss-targ} we begin by extending the
scope of \eqref{eq:15} to arbitrary target by introducing for
arbitrary positive $X$ a random variable $\rho^{\gamma}_X(X)$ defined
(almost everywhere) through the identity:
\begin{align}
     E \left[  \sqrt{X}\rho_X^{\gamma}(X) \phi(X)\right] = - E
  \left[ \sqrt{X} \partial_x^{\sigma}\phi(X) \right]  \mbox{ for all }\phi \in
  \mathcal{F}^{\sigma}(X) \label{eq:8}
\end{align}
with $\mathcal{F}^{\sigma}(X)$ the collection of absolutely continuous
test functions $\phi : \mathbb{R} \to \mathbb{R}$ such that
$x \mapsto \sqrt{x}\partial_x^{\sigma}\phi(x) \in L^1(X)$. We call
$\rho_X^{\gamma}(X)$ defined by \eqref{eq:8} $X$'s $\gamma$-score.
Taking $\phi(x) = 1/\sqrt{x}$ in \eqref{eq:8} we conclude that if $X$
admits a $\gamma$-score then necessarily it satisfies
$E[\rho_X^{\gamma}(X)] =0$.  From \eqref{eq:15} we know that the
$\gamma(\alpha, \lambda)$ distribution is characterized by
\begin{align}
  \label{eq:27}
  \rho_Z^{\gamma}(Z) = -(\lambda \sqrt{Z}-(\alpha - {1}/{2})/{\sqrt{Z}})
\end{align}
Mimicking the Gaussian situation from Section
\ref{sec:ident-gauss-targ} it is natural to measure distance to the
gamma by comparing $\gamma$-scores with those in \eqref{eq:27}.

\begin{defi}[Standardized gamma Fisher information]
  The   standardized $\gamma(\alpha, \lambda)$-Fisher information of
 a  positive random variable $X$ with finite mean and pdf $p$ is:
\begin{align}
\label{eq:37}
  J_{\mathrm{st}, \gamma(\alpha, \lambda)}(X) = \frac{1}{\lambda}E
  \left[ \left(\rho^{\gamma}_X(X) + \lambda \sqrt{X} -\frac{\alpha -
  {1}/{2}}{\sqrt{X}} \right)^2  \right]. 
\end{align}
\end{defi}
Standardized gamma Fisher information is not location invariant (we
need the input to be positive) but behaves nicely under scaling (under the assumption that $E[X]=\alpha/\lambda$): 
\begin{equation}
  \label{eq:12}
  J_{\mathrm{st}, \gamma(\alpha, \lambda)}(aX) = J_{\mathrm{st},
    \gamma(\alpha, a\lambda)}(X) = \frac{1}{a}J_{\mathrm{st},
    \gamma(\alpha, \lambda)}(X) + \alpha \frac{(a-1)^2}{a}. 
\end{equation}
  Note that (by straightforward integration by parts starting from
\eqref{eq:8})
\begin{align}
  \label{eq:22}
  \sqrt{X}\rho_X^{\gamma}(X) =  
  X \rho_X(X) + \frac{1}{2}  
\end{align}
with $\rho_X(x) = (\log p_X(x))'$ the usual score of $X$ (here we abuse
notations slightly w.r.t.\ the definitions from Section
\ref{sec:ident-gauss-targ}).  Hence we can rewrite
\eqref{eq:37} as
\begin{equation}
  \label{eq:58}
  J_{\mathrm{st}, \gamma(\alpha, \lambda)}(X) = \frac{1}{\lambda} E \left[ X (\rho_X(X)
  + \lambda -  (\alpha-1)/X)^2  \right]
\end{equation}
which is precisely the relative Fisher information advocated by
\cite{BGL}.  Aiming at a Cramer-Rao inequality one might wish to
expand the square in \eqref{eq:37} in order to
identify the correct gamma-Fisher information, but it is easy to
realize that this will not yield good results. Following \cite{ArSw1}
we rather propose to introduce
 \begin{equation}
   \label{eq:31}
   I_{\gamma(\alpha, \lambda)}^{r}(X) =  \frac{1}{\lambda} E \left[ X
     \left(\rho_X(X)+  \lambda(1+r)-\frac{(\alpha-1)}{X}\right)^2 \right]. 
 \end{equation}
 which we call a $r$-corrected gamma Fisher information.  Clearly
 $I_{\gamma(\alpha, \lambda)}^r(Z) = \alpha r^2$ for all $r \ge 0$ and all
 $\lambda>0$ if $Z \sim \gamma_{\alpha, \lambda}$ (recall that
 $\rho_Z(Z) = (\alpha-1)/Z - \lambda$ in this case) and simple
 computations show that
\begin{equation}
  \label{eq:32}
  J_{\mathrm{st}, \gamma(\alpha, \lambda)}(X) =  I_{\gamma(\alpha, \lambda)}^{r}(X) - \alpha
  r^2 \ge 0
\end{equation}
(we stress the important fact that this decomposition holds solely
under a first moment assumption on $X$, see also \cite[Remark
13]{ArSw1}).

The relative entropy with respect to the gamma distribution is defined
exactly as in the Gaussian case (recall \eqref{eq:13}):
\begin{equation}
  \label{eq:35}
  D(X || \gamma(\alpha, \lambda)) = \int_0^{+\infty} p_X(u)
  \log(p_X(u)/\gamma_{\alpha,\lambda}(u)) du
\end{equation}
with $X$ a random variable with density $p_X$ on the positive real
line. Note how gamma relative entropy does not behave as Gaussian
relative entropy under scaling:  
\begin{equation}
  \label{eq:52}
  D(aX || \gamma(\alpha, \lambda)) = D(X || \gamma(\alpha, a\lambda))
\end{equation}
for all $a>0$. 
There exists a De Bruijn identity specifically for
\eqref{eq:35}, first identified by \cite{BE,BGL} in the context of
probability semigroup theory and $\Gamma$-calculus. We state a
rescaling of the identity in its most general form as due to
\cite{ArSw1}.

\begin{theo}[Gamma De Bruijn identity]\label{sec:it-stein-identities-2}
Let  $\alpha\ge1/2$ and suppose that $X$ is a random variable with
finite $\alpha+4$ moments.  
Then
\begin{equation}
  \label{eq:33}
  \frac{\mathrm{d}}{\mathrm{d}r} D (X_r || \gamma(\alpha, \lambda/(1+r))) = \frac{1}{r}
   J_{st, \gamma(\alpha, \lambda)}(X_{r}) - \alpha \frac{r}{1+r}
\end{equation}
where
\begin{equation}
  \label{eq:34}
  X_{r} =  \gamma(\alpha-\frac{1}{2}, \lambda) + \left(
    \sqrt{r X} + \frac{N}{\sqrt{2\lambda}} \right)^2,
\end{equation}
with $\gamma(\alpha-\frac{1}{2}, \lambda)$ an independent gamma
distributed random variable with parameters $\alpha-1/2, \lambda$ and
$N$ as before an independent standard Gaussian random variable. The
integrated version is 
\begin{equation}
  \label{eq:59}
  D(X \, || \, \gamma(\alpha, \lambda)) = \int_0^{\infty}\left(  \frac{1}{r}
   J_{st, \gamma(\alpha, \lambda)}(X_{r}) - \alpha \frac{r}{1+r}\right)dr.
\end{equation}
\end{theo}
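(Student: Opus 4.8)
The plan is to obtain \eqref{eq:33} from the gamma De Bruijn identity of \cite{ArSw1} by a change of parametrisation, the two scaling rules \eqref{eq:12} and \eqref{eq:52} supplying the moving reference $\gamma(\alpha,\lambda/(1+r))$ and the corrective term $-\alpha r/(1+r)$. The indispensable preliminary is to recognise the quadratic channel \eqref{eq:34} as a rescaled gamma Ornstein--Uhlenbeck evolute. Conditionally on $X=x$, the summand $(\sqrt{rx}+N/\sqrt{2\lambda})^2$ is $\tfrac{1}{2\lambda}$ times a noncentral $\chi^2_1$ variable with noncentrality $2\lambda r x$, so adding the independent $\gamma(\alpha-\tfrac12,\lambda)=\tfrac{1}{2\lambda}\chi^2_{2\alpha-1}$ makes $X_r\mid\{X=x\}$ equal to $\tfrac{1}{2\lambda}$ times a noncentral $\chi^2_{2\alpha}$ with noncentrality $2\lambda r x$; equivalently $E[e^{-sX_r}\mid X=x]=(\lambda/(\lambda+s))^{\alpha}\exp(-\lambda r s\, x/(\lambda+s))$. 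This is exactly the transition kernel of the Laguerre semigroup that leaves $\gamma(\alpha,\lambda)$ invariant, evaluated at the time encoded by $\sqrt r$; in particular $X_0\sim\gamma(\alpha,\lambda)$ while $X_r\approx rX$ for large $r$, which fixes the endpoints of the integrated identity.

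With this identification, I would differentiate $r\mapsto D(X_r\,||\,\gamma(\alpha,\lambda/(1+r)))$ by writing it (via \eqref{eq:52}) as a fixed-rate relative entropy of a dilated variable, so that the $r$-derivative splits into the intrinsic smoothing of the kernel and the explicit motion of the reference. The intrinsic part is handled exactly as in \cite{ArSw1}: the forward equation for the marginal density of $X_r$, tested against the gamma Stein operator $\partial_x^{\sigma}$ through an integration by parts, reconstitutes the $\gamma$-score \eqref{eq:8}--\eqref{eq:22} and hence the standardized gamma Fisher information \eqref{eq:37}. The explicit motion of the reference, together with the scaling rule \eqref{eq:12}, reconciles $J_{st,\gamma(\alpha,\lambda)}(X_r)$ (computed at the fixed rate $\lambda$, as it stands on the right of \eqref{eq:33}) with the Fisher information of the dilated variable; the quadratic correction $\alpha(a-1)^2/a$ appearing in \eqref{eq:12} is precisely what collapses, after the chain rule, to the term $-\alpha r/(1+r)$.

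The integrated form \eqref{eq:59} then follows from the fundamental theorem of calculus once the endpoints are settled: $D(X_0\,||\,\gamma(\alpha,\lambda))=0$ because $X_0\sim\gamma(\alpha,\lambda)$, and $D(X_r\,||\,\gamma(\alpha,\lambda/(1+r)))\to D(X\,||\,\gamma(\alpha,\lambda))$ as $r\to\infty$ since $D(rX\,||\,\gamma(\alpha,\lambda/(1+r)))=D(X\,||\,\gamma(\alpha,r\lambda/(1+r)))$ by \eqref{eq:52}. I expect the genuine difficulty to be analytic rather than algebraic: one must legitimise the interchange of $d/dr$ with the integral defining the relative entropy and show that the boundary terms generated by the integration by parts against $\partial_x^{\sigma}$ vanish at both $0$ and $+\infty$. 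This is exactly where the hypotheses $\alpha\ge 1/2$ and the finiteness of the $\alpha+4$ moment of $X$ are consumed: they provide the integrability of the density of $X_r$ and of the weights $\sqrt x,\,1/\sqrt x$ against $\rho^{\gamma}_{X_r}$ needed to control the tails of the noncentral-$\chi^2$ kernel uniformly on compact $r$-intervals, as established in \cite{ArSw1}.
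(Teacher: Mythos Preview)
Your sketch is correct and is, in fact, more detailed than what the paper itself provides. The paper does not prove Theorem~\ref{sec:it-stein-identities-2} in the text: it is imported wholesale from \cite{ArSw1}, introduced as ``a rescaling of the identity in its most general form as due to \cite{ArSw1}.'' Your proposal makes that rescaling explicit---identifying \eqref{eq:34} as the Laguerre semigroup kernel, using \eqref{eq:52} and \eqref{eq:12} to pass between the fixed reference $\gamma(\alpha,\lambda)$ of \cite{ArSw1} and the moving reference $\gamma(\alpha,\lambda/(1+r))$, and reading off the endpoints $X_0\sim\gamma(\alpha,\lambda)$ and $X_r/r\to X$---which is precisely the content the paper leaves to the citation. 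Your diagnosis of where the hypotheses $\alpha\ge 1/2$ and finite $(\alpha+4)$-moment are spent (justifying the differentiation under the integral and the vanishing of boundary terms) is also the role they play in \cite{ArSw1}.
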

  
\section{A quadratic gamma channel}
\label{sec:nonl-gamma-chann}

Equations \eqref{eq:33} and \eqref{eq:34} lead us to introducing the
nonlinear gamma channel (with all notations as in Theorem
\ref{sec:it-stein-identities-2})
\begin{equation}
  \label{eq:46}
  X \mapsto X_r (:= X_{r, \alpha,\lambda}) = \gamma(\alpha-1/2, \lambda) + \left( \sqrt{rX} +
    \frac{N}{\sqrt{2\lambda}} \right)^2
\end{equation}
for $r>0$.  We also introduce the notation
\begin{equation}
  \label{eq:16}
Y_{r} = \sqrt{rX} + \frac{N}{\sqrt{2\lambda}}
\end{equation}
Conditionally on $X$, the random variable $X_r$ is the independent sum
of a gamma and a non-central chi-squared random variable. This is the
main difference between our channel \eqref{eq:46} and classical
``dual'' channels wherein the distribution of the output,
conditionally on the signal, remains within the same family of
distributions as the noise (such as for instance in Gaussian channels
studied in Section \ref{sec:ident-gauss-targ} or Poisson channels
\cite{GuoShamVer08}). 

 Exploiting the moment generating function of $X_r$ we
obtain the following description of the channel.
\begin{prop}\label{prop:someproperties}
  \begin{itemize}
  \item If $X$ has moment generating function $M_X(\cdot)$ on $(0, a)$
    then the moment generating function of $X_r$ is
  \begin{equation} 
    \label{eq:50}
    M_r(t) = \left( 1-\frac{t}{\lambda}
  \right)^{-\alpha} M_X \left( \frac{ rt }{1-\frac{t}{\lambda}} \right)
  \end{equation}
on $(0,  \lambda/(\lambda r/a + 1))$. 
\item In particular if  $E[X] = \alpha/\lambda$ then 
  \begin{equation*}
    E[X_r] = \frac{\alpha}{\lambda} + r E[X] = \frac{\alpha}{\lambda}(1+r).
  \end{equation*}
\item   Let
$\lambda_r \le \lambda$.  The
output $X_r$ is itself gamma distributed with parameters $(\alpha,
\lambda_r)$  if and only if the input is gamma distributed with
parameters $(\alpha, \frac{1}{r}
(\frac{1}{\lambda_r}-\frac{1}{\lambda}))$. 
  \end{itemize}
\end{prop}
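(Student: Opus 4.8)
The plan is to derive all three assertions from a single moment generating function (mgf) computation, namely the formula \eqref{eq:50} of the first item; the second and third items then follow as direct corollaries. The natural route is to condition on $X$ and exploit the conditional independence built into the channel \eqref{eq:46}: given $X=x$, the output $X_r$ is the independent sum of the gamma variable $\gamma(\alpha-1/2,\lambda)$ and the squared Gaussian $(\sqrt{rx}+N/\sqrt{2\lambda})^2$. Since the mgf of a sum of independent variables factorizes, the conditional mgf is the product of the two individual mgfs, and the unconditional mgf is obtained by averaging over $X$.

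For the conditional mgf I would treat the two summands separately. The gamma part contributes $(1-t/\lambda)^{-(\alpha-1/2)}$ for $t<\lambda$. For the squared Gaussian, writing $W=\sqrt{rx}+N/\sqrt{2\lambda}\sim\mathcal{N}(\sqrt{rx},1/(2\lambda))$, a completion-of-the-square computation gives the (scaled noncentral chi-square) mgf
\[
E\!\left[e^{tW^2}\,\big|\,X=x\right]=(1-t/\lambda)^{-1/2}\exp\!\left(\frac{rxt}{1-t/\lambda}\right),\qquad t<\lambda.
\]
Multiplying the two conditional mgfs, the exponents $-(\alpha-1/2)$ and $-1/2$ combine into $-\alpha$, and taking the expectation over $X$ replaces $\exp(rXt/(1-t/\lambda))$ by $M_X(rt/(1-t/\lambda))$, which is exactly \eqref{eq:50}. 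The admissible range of $t$ is pinned down by requiring simultaneously $t<\lambda$ (for the two noise mgfs) and $rt/(1-t/\lambda)<a$ (so that $M_X$ is finite); solving the latter inequality for $0<t<\lambda$ gives $t<a/(r+a/\lambda)=\lambda/(\lambda r/a+1)$, the stated interval.

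The second item is then immediate: differentiating \eqref{eq:50} at $t=0$ (or, more elementarily, using linearity together with $E[\gamma(\alpha-1/2,\lambda)]=(\alpha-1/2)/\lambda$ and $E[(\sqrt{rX}+N/\sqrt{2\lambda})^2\mid X]=rX+1/(2\lambda)$) gives the general identity $E[X_r]=\alpha/\lambda+rE[X]$, which collapses to $(\alpha/\lambda)(1+r)$ once the normalization $E[X]=\alpha/\lambda$ is imposed. For the third item I would invoke the fact that a gamma law is characterized by its mgf: $X_r\sim\gamma(\alpha,\lambda_r)$ if and only if $M_r(t)=(1-t/\lambda_r)^{-\alpha}$. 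Substituting \eqref{eq:50} and performing the change of variables $s=rt/(1-t/\lambda)$, which is invertible via $t=s\lambda/(r\lambda+s)$, the factor $\big((1-t/\lambda)/(1-t/\lambda_r)\big)^{\alpha}$ simplifies to $\big(1-s(\lambda-\lambda_r)/(r\lambda\lambda_r)\big)^{-\alpha}$, exhibiting $M_X$ as a gamma mgf of shape $\alpha$ whose rate is read off by inspection; the converse direction is simply this computation read backwards.

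The computations are individually routine, so the only points demanding genuine care are the completion-of-the-square for the noncentral chi-square mgf together with the simultaneous bookkeeping of the two convergence domains, and the algebraic inversion in the third item, where one must keep careful track of which parameter plays the role of the rate after the substitution. The appeal to mgf-uniqueness for the ``only if'' direction is standard (equality of mgfs on an open interval forces equality in law), so the main obstacle is essentially the parameter-matching algebra rather than anything analytically delicate.
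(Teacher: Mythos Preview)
Your proposal is correct and follows essentially the same route as the paper: condition on $X$, factor the mgf into the gamma contribution $(1-t/\lambda)^{-(\alpha-1/2)}$ and the noncentral chi-square contribution $(1-t/\lambda)^{-1/2}\exp\!\big(rxt/(1-t/\lambda)\big)$, then average over $X$ and deduce the remaining items from \eqref{eq:50}. The only cosmetic differences are that the paper invokes the noncentral chi-square mgf by name rather than redoing the completion of the square, and it omits an explicit argument for the mean identity (your differentiation-at-zero / linearity argument is the natural filler).
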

\begin{proof}
  Identity \eqref{eq:50} follows by independence as well as the fact
  that,  conditionally  on $X$, the
random variable  $\left( \sqrt{2\lambda rX} +
    N \right)^2$ is noncentral
  chi square distributed with non-centrality parameter
  $\sqrt{2\lambda rX}$. Hence 
  \begin{align*}
   E \left[ e^{tX_r}  \right]  &  =  \left( 1-\frac{t}{\lambda}
  \right)^{-(\alpha-1/2)} E \left[ e^{t\left( \sqrt{rX} +
        \frac{N}{\sqrt{2\lambda}} \right)^2} \right] \\
& = \left( 1-\frac{t}{\lambda}
  \right)^{-(\alpha-1/2)} \frac{E \left[ e^{ \frac{ rt }{1-t/\lambda}X}  \right]  }{(1-t/\lambda)^{1/2}},
  \end{align*}
  which is defined as long as $t\le \lambda$ and
  ${ rt }/({1-t/\lambda})\le a$.  To see the next claim it suffices to
  notice that if $M_r(t) =(1-t/\lambda_r)^{-\alpha}$ then necessarily
\begin{equation*}
  M_X(t) = \left( 1 - \frac{t}{r} \left(\frac{1}{\lambda_r} -
      \frac{1}{\lambda}  \right) \right)^{-\alpha} 
\end{equation*}
for $t$ sufficiently small. 
\end{proof}
\begin{rem}\label{rem:gammagamma}
  An equivalent way to express the second point in
  Proposition~\ref{prop:someproperties}: if $X$ is gamma distributed
  with parameters $\alpha, \lambda_1$ then $X_r$ is gamma distributed
  with parameters $(\alpha, \lambda_r)$ where
  $\lambda_r = (\frac{r}{\lambda_1} + \frac{1}{\lambda})^{-1}$ for all
  $r>0$.
\end{rem}

\section{Relative entropy and estimation theory}
\label{sec:interr-gamma-mmse}

\begin{prop}\label{prop:interr-gamma-mmse-1}
  Let $\alpha\ge 1/2$ and suppose that $X$ is positive with finite
  mean. Define $X_r, Y_r$ as in \eqref{eq:46}, \eqref{eq:16} and
  introduce the ratio
  \begin{equation}
    \label{eq:54}
  \operatorname{\mathcal{V}}_r(X) = \frac{Y_r}{\sqrt{X_r}}.  
  \end{equation}
Then
  \begin{equation}
    \label{eq:38}
    \rho_r^{\gamma}(X_r) + \lambda \sqrt{X_r} -(\alpha- 1/2)/\sqrt{X_r} = E \left[ \lambda \sqrt{r X}
    \operatorname{\mathcal{V}}_r(X)  \, | \, X_r \right]
  \end{equation}
and  
\begin{equation}
  \label{eq:25}
  J_{st,\gamma(\alpha, \lambda)}(X_r)  = \lambda E \left[ E \left[ \sqrt{r X}
     \operatorname{\mathcal{V}}_r(X)  \, | \, X_r \right]^2 \right].
\end{equation}
\end{prop}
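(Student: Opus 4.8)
The plan is to prove the pointwise identity \eqref{eq:38} by a conditional Stein-identity argument in direct analogy with the Gaussian derivation of the score representation \eqref{eq:10}, and then to read off \eqref{eq:25} immediately by inserting \eqref{eq:38} into the definition \eqref{eq:37} of the standardized gamma Fisher information. Throughout I write $S_r := \rho_r^{\gamma}(X_r) + \lambda\sqrt{X_r} - (\alpha-1/2)/\sqrt{X_r}$ for the quantity on the left of \eqref{eq:38}, and I exploit the decomposition $X_r = G + Y_r^2$ coming from \eqref{eq:46}, with $G \sim \gamma(\alpha-1/2,\lambda)$ independent of $Y_r$ as in \eqref{eq:16}, and with $Y_r$ conditionally $\mathcal{N}(\sqrt{rX},1/(2\lambda))$ given $X$.

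First I would reduce $S_r$ to a clean Stein functional. Starting from the defining relation \eqref{eq:8} for the $\gamma$-score of $X_r$ and using $\sqrt{x}\,\partial_x^{\sigma}\phi(x) = \tfrac12\phi(x) + x\phi'(x)$, testing $S_r$ against $\sqrt{X_r}\,\phi(X_r)$ gives
\begin{equation*}
E\!\left[\sqrt{X_r}\,S_r\,\phi(X_r)\right] = E\!\left[(\lambda X_r - \alpha)\phi(X_r)\right] - E\!\left[X_r\phi'(X_r)\right],
\end{equation*}
the half-integer shift $-(\alpha-1/2)$ from $S_r$ and the extra $-1/2$ from $\sqrt{x}\,\partial_x^{\sigma}$ combining precisely into the gamma Stein operator of parameter $\alpha$. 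It therefore suffices to show the right-hand side equals $E[\lambda\sqrt{rX}\,Y_r\,\phi(X_r)]$, since by the tower property the latter is exactly $E[\sqrt{X_r}\,E[\lambda\sqrt{rX}\,\mathcal{V}_r(X)\mid X_r]\,\phi(X_r)]$ with $\mathcal{V}_r(X)=Y_r/\sqrt{X_r}$ as in \eqref{eq:54}; as $\phi$ ranges over the class $\mathcal{F}^{\sigma}$ of \eqref{eq:8} this forces $S_r = E[\lambda\sqrt{rX}\,\mathcal{V}_r(X)\mid X_r]$ almost everywhere, which is \eqref{eq:38}.

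The core computation is then conditional on $X$. Fixing also $Y_r = y$ and setting $\psi(g) = \phi(g + y^2)$, the gamma Stein identity \eqref{eq:29} applied to $G \sim \gamma(\alpha-1/2,\lambda)$ cancels the bulk of $(\lambda X_r - \alpha)\phi(X_r) - X_r\phi'(X_r)$ and leaves the residual $(\lambda y^2 - \tfrac12)\,E_G[\phi(G+y^2)] - y^2\,E_G[\phi'(G+y^2)]$. Writing $h(y) := E_G[\phi(G+y^2)]$, so that $E_G[\phi'(G+y^2)] = h'(y)/(2y)$ and the second term is $\tfrac{y}{2}h'(y)$, I would then integrate by parts against the Gaussian law of $Y_r$ given $X$: using $q'(y) = -2\lambda(y-\sqrt{rX})q(y)$ for its density $q$, the term $E_{Y_r}[\tfrac{Y_r}{2}h'(Y_r)]$ produces exactly $E_{Y_r}[(\lambda Y_r^2 - \tfrac12)h(Y_r)] - E_{Y_r}[\lambda\sqrt{rX}\,Y_r\,h(Y_r)]$, so the residual collapses to $E[\lambda\sqrt{rX}\,Y_r\,\phi(X_r)\mid X]$. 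Averaging over $X$ closes \eqref{eq:38}; squaring it and inserting into \eqref{eq:37} then yields \eqref{eq:25}, the prefactor $1/\lambda$ absorbing one of the two factors of $\lambda$ coming from \eqref{eq:38}.

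The main obstacle is analytic rather than algebraic: I must justify the two integrations by parts — vanishing of the boundary terms and the interchange of the conditional expectations $E_G$ and $E_{Y_r}$ — and verify that the test-function class is rich enough for the a.e.\ identification, all under only the finite-mean hypothesis on $X$ together with $\alpha \ge 1/2$ (the latter ensuring $\gamma(\alpha-1/2,\lambda)$ is a genuine distribution and that the $\sqrt{X_r}$-weighted integrands are integrable). The regularity and smoothing apparatus of \cite{Barr3,ArSw1} is tailored to exactly this situation, and I would invoke it to legitimize differentiating under the integral sign and to control the boundary estimates, precisely as in the Gaussian template leading to \eqref{eq:10}.
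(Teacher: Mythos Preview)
Your proposal is correct and follows essentially the same route as the paper: reduce $E[\sqrt{X_r}\,S_r\,\phi(X_r)]$ via the defining relation \eqref{eq:8} to $\lambda E[X_r\phi(X_r)]-\alpha E[\phi(X_r)]-E[X_r\phi'(X_r)]$, then use the gamma Stein identity \eqref{eq:29} on the $G=\gamma(\alpha-1/2,\lambda)$ component and Gaussian integration by parts on the $Y_r$ component to collapse everything to $E[\lambda\sqrt{rX}\,Y_r\,\phi(X_r)]$, concluding by a density argument. The only cosmetic difference is that the paper expands $Y_r^2=rX+\sqrt{2/\lambda}\,\sqrt{rX}\,N+N^2/(2\lambda)$ and applies the Gaussian Stein identity \eqref{eq:1} twice (once to $n\mapsto n\phi$ and once to $n\mapsto\sqrt{rX}\,\phi$), whereas you keep $Y_r$ intact and do a single integration by parts in $y$; the two computations are equivalent.
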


\begin{rem}
  Note how in particular if $\alpha=1/2$ then \eqref{eq:54} reduces to
  $\mathrm{sign}(Y_r)$, the sign of $\sqrt{rX} +
  N/\sqrt{2\lambda}$. This quantity plays a central
  role in Stein type representations for gamma specific Fisher information as obtained in
  \cite[Proposition 23]{ArSw1}.
\end{rem}
\begin{rem}
  An immediate consequence of \eqref{eq:25},  Jensen's inequality
  for conditional  expectations and the fact that
  $\mid \operatorname{\mathcal{V}}_r(X) \mid\le 1$ is the inequality 
  \begin{equation}
    \label{eq:28}
  J_{\mathrm{st}, \gamma(\alpha, \lambda)}(X_r)  \le \lambda r E[X]
  \end{equation}
for all $\lambda, r \ge 0$ and all $\alpha\ge 1/2$. 
\end{rem}
\begin{proof}
  Identity \eqref{eq:25} follows immediately from \eqref{eq:38} and
  \eqref{eq:37}.  To see \eqref{eq:38} note how for all smooth test
  functions 
  \begin{align}
    & E \left[ \left( \rho_r^{\gamma}(X_r) + \lambda \sqrt{X_r}
      -(\alpha- 1/2)/\sqrt{X_r} \right)
    \sqrt{X_r} \phi(X_r) \right] \nonumber\\
    & = E \left[\rho_r^{\gamma}(X_r)\sqrt{X_r} \phi(X_r)    \right] +
      \lambda E \left[ X_r \phi(X_r) \right] - E \left[  (\alpha- 1/2)
       \phi(X_r)\right]\nonumber \\
    & = - E \left[ \sqrt{X_r} \left( \frac{1}{2 \sqrt{X_r}} \phi(X_r)
      + \sqrt{X_r} \phi'(X_r) \right) \right]  + \lambda E \left[ X_r
      \phi(X_r) \right] - E \left[  (\alpha- 1/2)
       \phi(X_r)\right]\nonumber\\
& = -  \alpha E \left[ \phi(X_r)\right] 
      -  E \left[ {X_r} \phi'(X_r) \right]  + \lambda E \left[ X_r
      \phi(X_r) \right].\label{eq:39}
  \end{align}
Expanding  \eqref{eq:34}  we can rewrite the third summand
as 
\begin{align*}
 \lambda  E \left[ X_r \phi(X_r) \right] & =E \left[ \lambda \gamma(\alpha-
                                           \frac{1}{2}, \lambda)
       \phi(X_r)\right] + E \left[ \lambda rX \phi(X_r) \right] +
                                    \sqrt{2\lambda}
                                   E \left[  \sqrt{rX} N \phi(X_r)  \right]
+ \frac{1}{2} E \left[ N^2\phi(X_r) \right]. 
\end{align*}
Applying \eqref{eq:29}  to the function $ \gamma \mapsto 
\phi(\gamma + Y_r^2)$ we get 
\begin{align}
  \label{eq:51}
 E \left[ \lambda \gamma(\alpha- \frac{1}{2}, \lambda)\phi(X_r)\right]
  & = (\alpha-\frac{1}{2}) E \left[ \phi(X_r) \right] + E \left[
    \gamma(\alpha-\frac{1}{2}, \lambda) \phi'(X_r) \right]. 
\end{align}
Applying \eqref{eq:1} to the
function $n \mapsto   n\phi(\gamma(\alpha-
                                           \frac{1}{2}, \lambda)+(\sqrt{r X} +
  {n}/{\sqrt{2\lambda}})^2)$ we get 
\begin{align}
  \frac{1}{2} E \left[ N^2\phi(X_r) \right] 
  & = \frac{1}{2}E \left[ \phi(X_r)  \right] + E \left[ N \phi'(X_r)  \frac{
    Y_r}{\sqrt{2 \lambda}} \right].\label{eq:48}
\end{align}
Applying \eqref{eq:1} to $n \mapsto   \sqrt{rX}  \phi(\gamma(\alpha-
                                           \frac{1}{2}, \lambda)+(\sqrt{r X} +
  {n}/{\sqrt{2\lambda}})^2)$ we get 
\begin{align}\label{eq:55}
   \sqrt{2\lambda} E \left[  \sqrt{rX} N \phi(X_r)  \right] & 
=  E \left[  \sqrt{rX} \phi'(X_r) {
    2 Y_r} \right]. 
\end{align}
Resuming from \eqref{eq:39} we compute
\begin{align}
&    E \left[ \left( \rho_r^{\gamma}(X_r) + \lambda \sqrt{X_r}  -(\alpha- 1/2)/\sqrt{X_r} \right)
    \sqrt{X_r} \phi(X_r) \right] \nonumber\\
& =   E \left[ \left\{ -\alpha +(\alpha-\frac{1}{2})  +\frac{1}{2}  + \lambda rX  \right\}\phi(X_r)
  \right]  + E  \left[  \left\{ -X_r+ \gamma(\alpha-\frac{1}{2},
  \lambda) + 
  \sqrt{rX}  {
  2 Y_r} + N  \frac{
  Y_r}{\sqrt{2 \lambda}}  \right\} \phi'(X_r) \right] \nonumber\\
& =   E \left[    \lambda rX  \phi(X_r)
  \right] 
+     
 E  \left[   \sqrt{rX} {Y_r} \phi'(X_r)
    \right]\nonumber \\
   & =  \lambda E \left[  \sqrt{rX} \left( \sqrt{rX} +
     \frac{N}{\sqrt{2\lambda}}\right)  \phi(X_r)
     \right],  \label{eq:24}
\end{align}
the last identity being a consequence of \eqref{eq:55}. By a standard
density argument (identity \eqref{eq:24} is valid for all smooth
functions with compact support) we can then deduce the representation
\begin{equation}\label{eq:53}
  \rho_r^{\gamma}(X_r) + \lambda \sqrt{X_r}  -(\alpha- 1/2)/\sqrt{X_r}
  =  \frac{\lambda E \left[  \sqrt{rX} \left( \sqrt{rX} +
        \frac{N}{\sqrt{2\lambda}}\right)  \, | \, X_r\right]}{\sqrt{X_r}}
  = \lambda E \left[ \sqrt{rX}  \operatorname{\mathcal{V}}_r(X)  \, | \,
    X_r \right]
\end{equation}
which leads to \eqref{eq:38}. 
\end{proof}

Combining \eqref{eq:25} with the gamma-specific De Bruijn identity
\eqref{eq:33} we  immediately obtain that 
if $X$ is a positive random variable with finite $\alpha+4$ moment
then 
  \begin{equation}
    \label{eq:57}
    \frac{\mathrm{d}}{\mathrm{dr}} D(X_r \, || \, \gamma(\alpha,
    \lambda/(1+r))) =  \frac{ \lambda}{r} E \left[ E
        \left[ \sqrt{rX} \operatorname{\mathcal{V}}_r(X)  \, | \, X_r
        \right]^2 \right]  -  \alpha \frac{r}{r+1}
  \end{equation}
for all $r>0$. 

\begin{exa}
  Suppose that the input signal $X$ is gamma distributed with
  parameters $(\alpha, \lambda)$ so that $X_r$ follows a gamma law
  with parameters $(\alpha, \lambda/(1+r))$ for each $r>0$ (recall
  Remark~\ref{rem:gammagamma}). Then, thanks to (\ref{eq:53}), we have
\begin{equation}
  \label{eq:43}
  E \left[ \sqrt{r X}
    \operatorname{\mathcal{V}}_r(X)\, | \, X_r \right]  =
\frac{r}{1+r}  \sqrt{X_r}
\end{equation}
so that  $  J_{\mathrm{st}, \gamma(\alpha, \lambda)} (X_r) = \frac{\alpha
    r^2}{r+1} $ 
and $ \frac{\mathrm{d}}{\mathrm{dr}} D(X_r \, || \, \gamma(\alpha,
    \lambda/(1+r))) = 0$, 
as expected.
\end{exa}


\section{Mutual information and estimation theory}
\label{sec:form-mutu-inform}


We start by restating identity \eqref{MI-RE-gau} (which actually holds
true for any channels) in the present gamma-target context. Let $r>0$,
$\tau(r)=r/(r+1)$ and introduce the random variables
\begin{align}
  &\tilde{X}_{\tau(r)}(x)=(1-\tau(r))\gamma({\alpha-\frac{1}{2},\lambda})+\big(\sqrt{\tau(r)x}+\dfrac{\sqrt{1-\tau(r)}}{\sqrt{2\lambda}}Z\big)^2,\\
& X_r(x) = \gamma(\alpha, 1/2) + \left( \sqrt{rx} + \frac{N}{\sqrt{2\lambda}} \right)^2
\end{align}
for $x \in [0, +\infty)$. We also write $ X_r =  X_r(X) $ and $\tilde{X}_{\tau(r)} =
\tilde{X}_{\tau(r)}(X)$. Then
$I(X,{X}_{r}) = I(X,\tilde{X}_{\tau(r)})$ and
\begin{equation}
\label{MI-RE}
I(X,\tilde{X}_{\tau(r)})= E_X \left[
  D(\tilde{X}_{\tau(r)}(X)\mid\mid \gamma(\alpha, \lambda))
\right]-D(\tilde{X}_{\tau(r)}\mid\mid \gamma(\alpha, \lambda)),
\end{equation}
where $E_X[\cdot]$ denotes an expectation taken with respect to $X$ as in (\ref{ExpectationX}).   
Similarly as in Section \ref{sec:ident-gauss-targ} we also deduce from
the gamma-De Bruijn identity (see Theorem 14 of \cite{ArSw1}) as well as the chain rule
for differentiation:
\begin{lem}\label{DB-MMSE} Let $\alpha\geq 1/2$, $\lambda>0$ and $r>0$.
  If $X$ is almost surely positive with finite $\alpha+4$ moments and
  mean $E[X] = \alpha/\lambda$ then
\begin{equation}
\label{DB-MMSE-eq}
\dfrac{d}{dr}\bigg(I(X,X_{r})\bigg)=\dfrac{1}{r(r+1)}\bigg(E_X
\left[ J_{st, \gamma(\alpha, \lambda)}(\tilde{X}_{\tau(r)}(X)) \right]-J_{st,\gamma(\alpha, \lambda)}(\tilde{X}_{\tau(r)})\bigg),
\end{equation}
\end{lem}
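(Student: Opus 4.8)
The plan is to mirror the Gaussian computation leading to \eqref{DB-MMSE-eq-gau}, exploiting the fact that the variance-normalized channel $\tilde{X}_{\tau(r)}$ is merely a deterministic rescaling of $X_r$. First I would make this rescaling explicit: factoring $(1-\tau(r))$ out of the square in the definition of $\tilde{X}_{\tau(r)}(x)$ and using $\tau(r)/(1-\tau(r)) = r$ together with $1-\tau(r) = 1/(1+r)$, one gets $\tilde{X}_{\tau(r)}(x) = \frac{1}{1+r} X_r(x)$ for every fixed $x\ge 0$, and hence $\tilde{X}_{\tau(r)} = \frac{1}{1+r} X_r$ in law. Since mutual information is invariant under a deterministic rescaling of the output this gives $I(X, X_r) = I(X,\tilde{X}_{\tau(r)})$, while the scaling identity \eqref{eq:52} (applied both conditionally on $X=x$ and unconditionally) turns every relative entropy in \eqref{MI-RE} into the form occurring in the gamma-De Bruijn identity \eqref{eq:33}:
\begin{equation*}
D(\tilde{X}_{\tau(r)}(x) \, || \, \gamma(\alpha,\lambda)) = D(X_r(x) \, || \, \gamma(\alpha, \lambda/(1+r))), \qquad D(\tilde{X}_{\tau(r)} \, || \, \gamma(\alpha,\lambda)) = D(X_r \, || \, \gamma(\alpha,\lambda/(1+r))).
\end{equation*}

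Next I would differentiate \eqref{MI-RE} in $r$ and apply the gamma-De Bruijn identity \eqref{eq:33} of Theorem~\ref{sec:it-stein-identities-2} to each term, using it conditionally with the deterministic input $x$ (a constant trivially has finite $\alpha+4$ moments) and then integrating against $p_X$, as well as unconditionally for $X$. A pleasant simplification occurs at this stage: the additive term $-\alpha r/(1+r)$ produced by \eqref{eq:33} does not depend on the input, so it appears identically in the conditional (averaged) and the unconditional derivative and cancels in the difference, leaving
\begin{equation*}
\frac{d}{dr} I(X, X_r) = \frac{1}{r}\left( E_X\left[ J_{st,\gamma(\alpha,\lambda)}(X_r(X)) \right] - J_{st,\gamma(\alpha,\lambda)}(X_r) \right).
\end{equation*}

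It then remains to re-express the Fisher informations of $X_r$ and $X_r(x)$ in terms of those of $\tilde{X}_{\tau(r)}$ and $\tilde{X}_{\tau(r)}(x)$ via the scaling rule \eqref{eq:12} with $a=1+r$. For the unconditional term this is immediate, because $E[\tilde{X}_{\tau(r)}] = \alpha/\lambda$ (which follows from the mean hypothesis $E[X]=\alpha/\lambda$ via Proposition~\ref{prop:someproperties}), so \eqref{eq:12} applies verbatim and produces the correction $\alpha r^2/(1+r)$. For the conditional term the individual law $\tilde{X}_{\tau(r)}(x)$ has mean $\frac{1}{1+r}(\alpha/\lambda + rx) \neq \alpha/\lambda$, so the correction generated by scaling is linear in $x$; the point is that, after integrating against $p_X$ and invoking $E[X]=\alpha/\lambda$, this correction averages to exactly the same $\alpha r^2/(1+r)$ and hence cancels against the unconditional one in the difference. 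Collecting the surviving factor $\frac{1}{1+r}$ then yields \eqref{DB-MMSE-eq}.

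I expect the genuine difficulty to be analytic rather than algebraic, namely the justification of differentiating under the expectation $E_X[\cdot]$ and of the chain rule in $r$, which requires a uniform-in-$x$ domination of the $r$-derivatives of $x \mapsto D(X_r(x) \, || \, \gamma(\alpha,\lambda/(1+r)))$. This is precisely the regularity issue delegated to \cite{Barr3} in the Gaussian proof of \eqref{DB-MMSE-eq-gau}; here it is underpinned by the finite $\alpha+4$ moment assumption together with the smoothing analysis behind the gamma-De Bruijn identity in \cite{ArSw1}. Once this interchange is secured, the remaining steps are the purely bookkeeping applications of \eqref{eq:52} and \eqref{eq:12} described above, and the mean hypothesis $E[X]=\alpha/\lambda$ is exactly what makes the scaling corrections cancel.
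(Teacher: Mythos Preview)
Your proposal is correct and follows essentially the same strategy the paper sketches (``gamma-De Bruijn identity \ldots\ as well as the chain rule''), filling in the algebra that the paper omits. One minor difference in bookkeeping: you route through \eqref{eq:33}, obtaining first $\frac{1}{r}\big(E_X[J_{st,\gamma}(X_r(X))]-J_{st,\gamma}(X_r)\big)$, and then rescale back to $\tilde X_{\tau(r)}$ via \eqref{eq:12}; the paper's citation of Theorem~14 in \cite{ArSw1} together with the chain rule $\frac{d}{dr}=\tau'(r)\frac{d}{d\tau}=\frac{1}{(1+r)^2}\frac{d}{d\tau}$ presumably yields the $\frac{1}{r(r+1)}$ prefactor and $J_{st,\gamma}(\tilde X_{\tau(r)})$ in a single step, avoiding the detour. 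Your observation that the scaling corrections cancel in the difference is right (in fact they cancel even without the mean hypothesis, as the paper notes in the Remark after Proposition~\ref{Boundalphademi}), and your identification of the analytic issue---dominated differentiation under $E_X$---is exactly the point the paper delegates to \cite{ArSw1}.
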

 
We are now in a position to obtain the gamma counterpart to the GSV
identity \eqref{eq:MMSE}. However, as already pointed out, the problem
with the quadratic gamma channel is that it is more difficult to
compute directly $J_{st, \gamma(\alpha, \lambda)}(\tilde{X}_{\tau(r)}(x))$ because
$\tilde{X}_{\tau(r)}(x)$ is not a gamma random variable but rather a
non-central gamma whose explicit density is complicated to manipulate.

\begin{prop}\label{Boundalphademi}
Let $\alpha\geq 1/2$, $\lambda>0$, $r>0$ and $X$ be a positive random variable with finite $\alpha+4$ moment
and mean equal to $\alpha/\lambda$. Then 
\begin{equation}
\label{gammaGSV}
\dfrac{d}{dr}\bigg(I\big(X,X_{r}\big)\bigg)=
\lambda\bigg(E_X \left[ X   E \big[ E \big[
  \operatorname{\mathcal{V}}_r(X) \, | \, X_{r}(X) \big]^2 \big]
\right]- E\big[E[\sqrt{X}\operatorname{\mathcal{V}}_r(X) \mid X_{r}]^2\big]\bigg).
\end{equation}
\end{prop}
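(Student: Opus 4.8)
The plan is to mimic, in the gamma setting, the argument proving Proposition~\ref{GSVGauss}: start from Lemma~\ref{DB-MMSE}, convert the two standardized gamma Fisher informations appearing in \eqref{DB-MMSE-eq} into quantities attached to the channel \eqref{eq:46}, and close the computation with the representation \eqref{eq:25}. The first step is a purely algebraic identity between the two perturbations. Expanding the definitions of $\tilde X_{\tau(r)}(x)$ and $X_r(x)$ and using $\tau(r)=r/(r+1)$, $1-\tau(r)=1/(r+1)$ and $(1-\tau(r))\,r=\tau(r)$, one checks that for every fixed $x\ge 0$
\begin{equation*}
  \tilde X_{\tau(r)}(x) \stackrel{\mathcal{L}}{=} \frac{1}{1+r}\,X_r(x),
\end{equation*}
the exact gamma counterpart of the Gaussian relation $\tilde X_{\tau(r)}(x)=(1+r)^{-1/2}X_r(x)$ used in Proposition~\ref{GSVGauss}, the exponent $1$ in place of $1/2$ reflecting the quadratic nature of the channel. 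This reduces the whole problem to understanding how $J_{st,\gamma(\alpha,\lambda)}$ behaves under the scaling $Y\mapsto aY$ with $a=1/(1+r)$.

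Next I would record the general scaling law for the standardized gamma Fisher information. Starting from \eqref{eq:58}, expanding the square and using the elementary identity $E[Y\rho_Y(Y)]=-1$ (one integration by parts), one obtains, for any positive $Y$ with finite mean and any $a>0$,
\begin{equation*}
  J_{st,\gamma(\alpha,\lambda)}(aY) = \frac{1}{a}\,J_{st,\gamma(\alpha,\lambda)}(Y) + \lambda\Big(a-\frac1a\Big)E[Y] - 2\alpha\Big(1-\frac1a\Big).
\end{equation*}
This refines \eqref{eq:12}, which is recovered when $E[Y]=\alpha/\lambda$; the refinement is genuinely needed, since the frozen output $X_r(x)$ has conditional mean $E[X_r(x)]=\alpha/\lambda+rx\neq\alpha/\lambda$, so \eqref{eq:12} does not apply to it verbatim.

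I would then apply this identity with $a=1/(1+r)$ to $\tilde X_{\tau(r)}(x)=\frac{1}{1+r}X_r(x)$ and to $\tilde X_{\tau(r)}=\frac{1}{1+r}X_r$, inserting $E[X_r(x)]=\alpha/\lambda+rx$ in the first case and $E[X_r]=\frac{\alpha}{\lambda}(1+r)$ (from Proposition~\ref{prop:someproperties}) in the second. Integrating the first against $p_X$ and invoking the hypothesis $E[X]=\alpha/\lambda$, the two expressions pick up the \emph{same} additive correction $-\alpha r^2$, which therefore cancels in the difference:
\begin{equation*}
  E_X\!\left[J_{st,\gamma(\alpha,\lambda)}(\tilde X_{\tau(r)}(X))\right]-J_{st,\gamma(\alpha,\lambda)}(\tilde X_{\tau(r)}) = (1+r)\Big(E_X\!\left[J_{st,\gamma(\alpha,\lambda)}(X_r(X))\right]-J_{st,\gamma(\alpha,\lambda)}(X_r)\Big).
\end{equation*}
Substituting into \eqref{DB-MMSE-eq}, the prefactor $1/(r(r+1))$ collapses to $1/r$.

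Finally I would rewrite the two surviving terms through \eqref{eq:25}. With the input frozen at $x$, $\sqrt{rx}$ becomes deterministic and \eqref{eq:25} reads $J_{st,\gamma(\alpha,\lambda)}(X_r(x))=\lambda r x\,E\big[E[\mathcal{V}_r(x)\mid X_r(x)]^2\big]$; dividing by $r$ and integrating against $p_X$ yields the first term of \eqref{gammaGSV}. Applied to the full random input, \eqref{eq:25} gives $\frac1r\,J_{st,\gamma(\alpha,\lambda)}(X_r)=\lambda\,E\big[E[\sqrt X\,\mathcal{V}_r(X)\mid X_r]^2\big]$, the second term, completing the identity. I expect the main obstacle to be the bookkeeping in the scaling step: because $\tilde X_{\tau(r)}(x)$ is a non-central gamma with intractable density, its Fisher information cannot be computed directly as in the Gaussian case and must be accessed through the general scaling law; the delicate point is then to verify that the mean-dependent corrections $-\alpha r^2$ cancel once $E_X$ is taken under $E[X]=\alpha/\lambda$, which is precisely what allows the clean GSV-type formula \eqref{gammaGSV} to emerge.
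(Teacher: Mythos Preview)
Your proposal is correct and follows essentially the same route as the paper: start from Lemma~\ref{DB-MMSE}, use the identity $\tilde X_{\tau(r)}(x)=\frac{1}{1+r}X_r(x)$ together with a scaling law for $J_{st,\gamma(\alpha,\lambda)}$ to reduce both Fisher informations to those of $X_r(x)$ and $X_r$, observe that the mean-dependent corrections become $-\alpha r^2$ and cancel, and conclude via~\eqref{eq:25}. The only cosmetic difference is that you state the general scaling identity $J_{st,\gamma(\alpha,\lambda)}(aY)=\frac{1}{a}J_{st,\gamma(\alpha,\lambda)}(Y)+\lambda(a-\tfrac1a)E[Y]-2\alpha(1-\tfrac1a)$ upfront and apply it twice, whereas the paper writes the two specialized instances separately and relegates the general formula to a remark.
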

\begin{proof}
First of all, applying Lemma \ref{DB-MMSE}, we have:
\begin{align}\label{1-step}
\dfrac{d}{dr}\bigg(I\big(X,X_{r}\big)\bigg)=\dfrac{1}{r(r+1)}\bigg(\int
  p_X(x)J_{st,  \gamma(\alpha,
  \lambda)}(\tilde{X}_{\tau(r)}(x))dx-J_{st, \gamma(\alpha, \lambda)}(\tilde{X}_{\tau(r)})\bigg),
\end{align}
Applying a slight extension of \eqref{eq:12} to $\tilde{X}_{\tau(r)} = \frac{1}{r+1}X_r$ we
deduce
\begin{equation}
\label{scaling1}
J_{st, \gamma(\alpha, \lambda)}(\tilde{X}_{\tau(r)})=(1+r)J_{st,
  \gamma(\alpha, \lambda)}(X_{r})-\alpha r^2.
\end{equation}
Applying Proposition \ref{prop:interr-gamma-mmse-1} then leads to 
\begin{equation}
\label{scaling2}
J_{st, \gamma(\alpha,\lambda)}(\tilde{X}_{\tau(r)})=(1+r)\lambda E\big[E[\sqrt{r
  X}\operatorname{\mathcal{V}}_r(X) \mid X_{r}]^2\big]-\alpha r^2.
\end{equation}
\noindent
Now, Proposition \ref{prop:interr-gamma-mmse-1} is true irrespectively
of the distribution of the input, so that we also have (for each fixed
$x$) 
\begin{align*}
J_{st, \gamma(\alpha \lambda)}(X_{r}(x)) = \lambda E \big[ E \big[ \sqrt{r x}
    \operatorname{\mathcal{V}}_r(x) \, | \, X_{r}(x) \big]^2 \big].
\end{align*}
Furthermore, we have 
\begin{equation}
J_{st, \gamma(\alpha, \lambda)}(\tilde{X}_{\tau(r)}(x))=(1+r)J_{st,\gamma(\alpha, \lambda)}(X_{r}(x))-2r^2\lambda x+\dfrac{\lambda r^2}{(1+r)}\big(\frac{\alpha}{\lambda}+rx\big),
\end{equation}
which leads to
\begin{align}
J_{st,\gamma(\alpha, \lambda)}(\tilde{X}_{\tau(r)}(x))=(1+r)\lambda E
  \big[ E \big[ \sqrt{r x}\operatorname{\mathcal{V}}_r(x)  \, | \,
  X_{r}(x)  \big]^2 \big]-2r^2\lambda x+\dfrac{\lambda r^2}{(1+r)}\big(\frac{\alpha}{\lambda}+rx\big).
\end{align}
Integrating the previous expression with respect to the density of $X$
together with the fact that $E[X]=\frac{\alpha}{\lambda}$, we obtain 
\begin{align}
\int_\mathbb{R^*_+}J_{st, \gamma(\alpha,
  \lambda)}(\tilde{X}_{\tau(r)}(x))p_X(x)dx&=(1+r)\int_\mathbb{R^*_+}\lambda
                                             E \big[ E \big[ \sqrt{r
                                             x}\operatorname{\mathcal{V}}_r(x)
                                             \, | \, X_{r}(x) \big]^2 \big]p_X(x)dx\\
&-2r^2\lambda \int_{\mathbb{R}^*_+}xp_X(x)dx+\dfrac{\lambda r^2}{1+r}\big(\frac{\alpha}{\lambda}+r\int_\mathbb{R^*_+}x p_X(x)dx\big),\\
&=(1+r)\int_\mathbb{R^*_+}\lambda E \big[ E \big[ \sqrt{r x}\operatorname{\mathcal{V}}_r(x) \, | \, X_{r}(x) \big]^2 \big]p_X(x)dx-2r^2\alpha+\alpha r^2,\\
&=(1+r)\int_\mathbb{R^*_+}\lambda E \big[ E \big[ \sqrt{r x}\operatorname{\mathcal{V}}_r(x) \, | \, X_{r}(x) \big]^2 \big]p_X(x)dx-\alpha r^2.\label{eq*2}
\end{align}
Combining (\ref{eq*2}) and (\ref{scaling2}) together with (\ref{1-step}), we obtain the relation:
\begin{equation}
\dfrac{d}{dr}\bigg(I\big(X,X_{r}\big)\bigg)=\lambda\bigg(\int_\mathbb{R^*_+}
x E \big[ E \big[ \operatorname{\mathcal{V}}_r(x) \, | \, X_{r}(x)  \big]^2 \big]p_X(x)dx-E\big[E[\sqrt{ X}\operatorname{\mathcal{V}}_r(X) \mid X_{r}]^2\big]\bigg)
\end{equation}
leading directly to the claim.


\end{proof}
\begin{rem}
   It should be clear that the previous result holds true even if $E[X]\ne \alpha/\lambda$. The proof is similar by using the general relation,
\begin{equation}
J_{st, \gamma(\alpha, \lambda)}(\tilde{X}_{\tau(r)})=(1+r)J_{st,
  \gamma(\alpha,\lambda)}(X_r)-2r^2\lambda E[X]+\dfrac{\lambda
  r^2}{(1+r)}\big(\frac{\alpha}{\lambda}+rE[X]\big), 
\end{equation}
instead of (\ref{scaling1}).

\end{rem}
\subsection{An upper bound}

 An immediate consequence of \eqref{gammaGSV} and the fact that
  $\mid\operatorname{\mathcal{V}}_r(X)\mid \le 1$ is the upper bound 
\begin{equation}\label{bound1}
  \dfrac{d}{dr}\bigg(I\big(X,X_{r}\big)\bigg)\leq
  \lambda\bigg(E[X]-E\big[E[\sqrt{ X} \operatorname{\mathcal{V}}_r(X)\mid X_{r}]^2\big]\bigg).
\end{equation}
Note that (\ref{bound1}) is very close to the Gaussian GSV identity
(\ref{eq:MMSE}).  In particular when $X\sim \gamma_{\alpha,\lambda}$,
using (\ref{eq:43}) and \eqref{bound1}, we have:
\begin{align}
\dfrac{d}{dr}\bigg(I\big(X,X_{r}\big)\bigg)\leq \dfrac{\alpha}{1+r},
\end{align}
which leads to the fine bound:
\begin{equation}\label{bound2}
I\big(X,X_r\big)\leq \alpha \log\big(1+r\big).
\end{equation}
The previous bound should be compared with the corresponding formula
$(11)$ of \cite{GuoShamVer051} which is satisfied by the mutual
information of the additive Gaussian channel with Gaussian input. When
$X\sim \gamma_{\alpha,\nu}$, we have the bound:
\begin{align}
I\big(X,X_{r}\big)\leq \alpha \log\big(1+\frac{\lambda r}{\nu}\big).
\end{align}

\subsection{A lower bound for $\alpha=\frac{1}{2}$}
We set $\alpha=\frac{1}{2}$. Assume that the input is gamma distributed with parameters $(1/2,\lambda)$. By definition, the mutual information between $X$ and $X_r$ is equal to:
\begin{align*}
I\big(X,X_r\big)=\int p_{X_r\mid X=x}\big(u,x\big)p_X(x)\log\bigg(\dfrac{p_{X_r\mid X=x}\big(u,x\big)}{p_{X_r}(u)}\bigg)dudx
\end{align*}
Let us compute explicitly the ratio between $p_{X_r\mid X=x}\big(u,x\big)$ and $p_{X_r}(u)$ in order to provide a simple lower bound for the logarithm term in the previous expression. We have:
\begin{align*}
\dfrac{p_{X_r\mid X=x}\big(u,x\big)}{p_{X_r}(u)}&=\Gamma(\alpha)\dfrac{\lambda e^{-\lambda u}e^{-\lambda r x}(\frac{u}{rx})^{\frac{\alpha-1}{2}}I_{\alpha-1}\big(2\lambda \sqrt{u x r}\big)}{u^{\alpha-1}e^{-\frac{\lambda u}{r+1}}\big(\frac{\lambda}{r+1}\big)^\alpha},\\
&=\Gamma(\alpha)(1+r)^\alpha \frac{1}{\lambda^{\alpha-1}}e^{-\frac{\lambda r u}{r+1}}e^{-\lambda r x}\dfrac{I_{\alpha-1}\big(2\lambda \sqrt{u x r}\big)}{\big(u r x\big)^{\frac{\alpha-1}{2}}}.
\end{align*}
Moreover, since $I_{-1/2}(z)=\sqrt{2/\pi}\cosh\big(z\big)/\sqrt{z}$, we obtain:
\begin{align}
\dfrac{p_{X_r\mid X=x}\big(u,x\big)}{p_{X_r}(u)}&=\sqrt{\pi}\sqrt{(1+r)} \frac{1}{\lambda^{-\frac{1}{2}}}e^{-\frac{\lambda r u}{r+1}}e^{-\lambda r x}\dfrac{\cosh\big(2\lambda\sqrt{u x r}\big)}{\sqrt{\pi \lambda}},\\
&=\sqrt{(1+r)}e^{-\frac{\lambda r u}{r+1}}e^{-\lambda r x}\cosh\big(2\lambda\sqrt{u x r}\big),\\
&\geq \frac{1}{2}\sqrt{(1+r)}e^{-\frac{\lambda r u}{r+1}}e^{-\lambda r x} e^{2\lambda\sqrt{u x r}}.
\end{align}
Using the monotonicity of the logarithm, we obtain:
\begin{align}
\log\bigg(\dfrac{p_{X_r\mid X=x}\big(u,x\big)}{p_{X_r}(u)}\bigg)\geq \log\bigg(\frac{1}{2}\sqrt{(1+r)}e^{-\frac{\lambda r u}{r+1}}e^{-\lambda r x} e^{2\lambda\sqrt{u x r}}\bigg).
\end{align}
This inequality implies the following on the mutual information between $X$ and $X_r$:
\begin{align}
I\big(X,X_r\big)&\geq \frac{1}{2}\log\big(1+r\big)-\log(2)-\frac{\lambda r}{r+1}E[X_r]-r\lambda E[X]+2 \lambda \sqrt{r}E[\sqrt{X_r}\sqrt{X}],\\
&\geq \frac{1}{2}\log\big(1+r\big)-\log(2)-r\alpha -r\alpha+2\lambda \sqrt{r}E[\mid\sqrt{r}X+\frac{Z\sqrt{X}}{\sqrt{2\lambda}} \mid ],\\
&\geq \frac{1}{2}\log\big(1+r\big)-\log(2)-2r\alpha+2\alpha r=\frac{1}{2}\log\big(1+r\big)-\log(2),
\end{align}
where we have used the fact $\mid x\mid\geq x$, $X$ and $Z$ are independent and $E[Z]=0$. This lower bound combined with the bound (\ref{bound2}) implies that:
\begin{align}\label{eq:3}
\underset{r\rightarrow+\infty}{\lim}\dfrac{I\big(X,X_r\big)}{\frac{1}{2}\log(1+r)}=1
\end{align}
\begin{rem}\label{highSNR}
\begin{itemize}
\item Thus, for $\alpha=1/2$ and for a gamma-$(1/2,\lambda)$ distributed input, the mutual information between $X$ and the output $X_r$ exhibits the same asymptotic for large values of the channel quality parameter $r$ as the mutual information between the additive Gaussian channel and a Gaussian input.
\item It would be nice to know if such an asymptotic is still true for
  $\alpha>1/2$ and a gamma-$(\alpha,\lambda)$ distributed input. More
  generally we ask the question: for which input distribution do we
  have the same type of asymptotic as in \eqref{eq:3} for the mutual
  information ? Such questions are related to the concept of ``MMSE
  dimension'', see \cite{wuverdu}.
\end{itemize}
\end{rem}

\section*{Acknowledgements} 
The research of BA is funded by a Welcome Grant from Liege University.


\def\cprime{$'$}

\end{document}